\theoremstyle{theorem}
\newtheorem{theorem}{Theorem}
\newtheorem{lemma}[theorem]{Lemma}
\theoremstyle{definition}
\theoremstyle{remark}
\newtheorem{remark}[theorem]{Remark}
\newcommand{\sL}{\mathcal{L}}
\newcommand{\fg}{\mathfrak{g}}
\newcommand{\fa}{\mathfrak{a}}
\newcommand{\fn}{\mathfrak{n}}
\newcommand{\fk}{\mathfrak{k}}
\newcommand{\fp}{\mathfrak{p}}
\newcommand{\ssm}{\smallsetminus}
\newcommand{\R}{\mathbb{R}}
\newcommand{\Z}{\mathbb{Z}}
\newcommand{\ad}{\operatorname{ad}}
\newcommand{\Ad}{\operatorname{Ad}}
\newcommand{\Exp}{\operatorname{Exp}}
\newcommand{\Mass}{\mathbf{M}}
\newcommand{\spt}{\operatorname{spt}}
\title{Linear isoperimetric inequality for homogeneous Hadamard manifolds}
\author{Hjalti Isleifsson}
\date{\vspace{-36pt}}
\begin{document}

\maketitle

\begin{abstract}
It is well known that simply connected symmetric spaces of non-positive sectional curvature admit a linear isoperimetric filling inequality for cycles of dimension greater than or equal to the rank of the space. In this note we extend that result to homogeneous Hadamard manifolds.
\end{abstract}

\makeatletter{\renewcommand*{\@makefnmark}{}
\footnotetext{{\it Date}: April 6, 2022.}
\footnotetext{Research supported by Swiss National Science Foundation Grant 197090}
\makeatother}

Let \(X\) be a proper metric space. For \(k\geq 0\) we let \(\Ic[k](X)\) denote the abelian group of \(k\)-dimensional integral metric currents in \(X\) with compact support.  Together with the boundary map \(\partial: \Ic[k+1](X)\rightarrow \Ic[k](X)\), they form a chain complex which generalizes the chain complex of singular Lipschitz chains with integer coefficients. Ignoring some technical details, the reader may safely think of the integral currents as Lipschitz chains. We let \(\Zc[k](X) = \{Z \in \Ic[k](X)\mid \partial Z = 0\}\) denote the subgroup of \(k\)-dimensional cycles. In section 2 in \cite{kl_hrh} is a brief summary of what we will need from the theory of currents. Recall that a {\it Hadamard manifold} is a simply connected complete Riemannian manifold with non-positive sectional curvature. A manifold is {\it homogeneous} if its group of isometries acts transitively. The {\it euclidean rank} of a complete Riemannian manifold is defined as the greatest integer \(r\geq 1\) such that there exists an isometric embedding of \(\R^r\) into the manifold. The aim of this note is to prove the following theorem.

\begin{theorem}\label{main_thm}
Let \(M\) be a homogeneous Hadamard manifold of dimension \(n\) and of euclidean rank \(r\geq 1\). For every \(k\) such that \(r\leq k < n\) there exists a constant \(c_k\) such that for every \(Z\in \Zc[k](M)\) there exists \(V\in \Ic[k+1](M)\) with \(\partial V = Z\) and \(\Mass(V)\leq c_k \Mass(Z)\).
\end{theorem}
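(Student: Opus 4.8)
The plan is to derive Theorem~\ref{main_thm} from the linear isoperimetric inequality established in \cite{kl_hrh} for cycles of dimension at least the asymptotic rank: for a proper metric space $X$ that admits a cocompact isometric action and a convex geodesic bicombing, and for every $k\ge\asrk(X)$, there is a constant $c_k$ so that each $Z\in\Zc[k](X)$ bounds some $V\in\Ic[k+1](X)$ with $\Mass(V)\le c_k\Mass(Z)$. A homogeneous Hadamard manifold $M$ is proper and complete, its isometry group acts transitively and hence cocompactly, and its Riemannian geodesics give a convex, consistent, equivariant bicombing, so \cite{kl_hrh} applies to $M$. The isometrically embedded $\R^r$ furnished by the hypothesis also embeds isometrically into every asymptotic cone of $M$, so $\asrk(M)\ge r$; therefore the whole problem reduces to the reverse estimate $\asrk(M)\le r$. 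Granting it, $\asrk(M)=r$ and the asserted inequality holds for all $k$ with $r\le k<n$ (the range $k\ge n$ being vacuous, since there are no nonzero compactly supported $n$-cycles in an $n$-manifold).

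To prove $\asrk(M)\le r$ I would use the structure theory of homogeneous manifolds of non-positive curvature (Heintze; Azencott--Wilson): $M$ is isometric to a simply connected solvable Lie group $S$ with a left-invariant metric, $\Lie(S)=\fa\oplus\fn$ with $\fa$ abelian and $\fn$ nilpotent, $\exp(\fa)\cdot o$ a flat of dimension $d\le r$, and $\fa$ contains ``regular'' elements $a_0$ along which the geodesic flow toward the associated point at infinity contracts the complementary nilpotent directions at a uniform exponential rate, while fixing $\exp(\fa)\cdot o$ pointwise; in the coordinates $S\cong N\times A$ this flow is conjugation by $\exp(-ta_0)$ on the $N$-factor composed with the (isometric) left translation by $\exp(-ta_0)$. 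The point is that, by homogeneity, these maps and their contraction rates are uniform over $M$: transverse to flats one has genuine, uniformly exponential divergence of geodesics, exactly as in rank one. It follows that an almost-Euclidean $(r+1)$-ball of unbounded radius cannot be realised in $M$ — the nilpotent directions it would have to use are uniformly exponentially divergent, and the flat directions alone span dimension at most $r$ — so no asymptotic cone of $M$ contains an isometric copy of $\R^{r+1}$; that is, $\asrk(M)\le r$.

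The step I expect to be the main obstacle is exactly this one: converting the geometric hypothesis ``$M$ contains no isometric $\R^{r+1}$'' into the asymptotic statement $\asrk(M)\le r$, equivalently, showing that the asymptotic cones of $M$ have geometric dimension at most $r$. For a general Hadamard manifold this fails, and even for a general cocompact one a soft compactness argument is not enough — the scaling sequence defining an asymptotic cone can collapse the relevant almost-flats too fast to recover a genuine flat by Arzelà--Ascoli — so the uniform exponential contraction provided by the regular flows of the solvable model is essential here. Everything else is either routine (verifying that $M$ meets the hypotheses of \cite{kl_hrh}) or a direct citation of the isoperimetric inequality in \cite{kl_hrh} once $\asrk(M)=r$ is known.
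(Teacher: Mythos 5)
Your reduction stands or falls with its first citation, and that is where it breaks: \cite{kl_hrh} does not contain a linear isoperimetric filling inequality for cycles of dimension at least the asymptotic rank. What is available in that generality (proper space, cocompact isometry group, convex geodesic bicombing, $k\ge \asrk(X)$) is a \emph{sub-Euclidean} inequality, i.e.\ fillings of mass $o(\Mass(Z)^{(k+1)/k})$ for cycles of large mass, due to Wenger; upgrading this to a linear inequality above the asymptotic rank in that generality is precisely the open problem lying behind the present paper. If the theorem you invoke existed, the symmetric space case would be an immediate corollary, whereas the introduction here points out that the only complete published proof even in that case is Leuzinger's \cite{leuzinger}; the paper cites \cite{kl_hrh} only for background on integral currents. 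So the proposal assumes, in a strengthened and more general form, essentially the statement to be proved. Your second step is also not the real obstacle you take it to be: for cocompact Hadamard manifolds (indeed cocompact $\CAT(0)$ spaces) the asymptotic rank is known to coincide with the maximal dimension of an isometrically embedded flat, by a theorem of Kleiner, so $\asrk(M)=r$ could be handled by citation rather than by your heuristic contraction argument. Fixing that step, however, does not rescue the proof, because the reduction target itself is unavailable.

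For comparison, the paper's actual argument is a direct construction that uses the solvable structure theory you allude to, but quantitatively and for fillings rather than for asymptotic cones: write $M=M_0\times M_1$ with $M_0$ Euclidean and $M_1$ isometric to $G=N\rtimes A$ with a left-invariant metric; by homogeneity (Lemma \ref{arblarge}) translate $Z$ into $M_0\times S$, where $S$ is the union of the translated cones $n\exp(C_0)$ on which every functional $\mu_j$ satisfies $\mu_j(H)\le -\varepsilon\|H\|$; the $N$-Jacobi field computations (Lemmas \ref{jacobi_field_lemma}, \ref{growthlemma}, \ref{voldistlemma}) show that the projection $\pi$ onto $M_0\times N$ is $1$-Lipschitz there and contracts $k$-dimensional volume, for $k\ge r$, by a factor $e^{-\lambda d(x,\pi(x))}$. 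The filling is then the mapping cylinder $V_1=\varphi_\#([[0,1]]\times Z)$, whose mass is at most $\frac{1}{\lambda}\Mass(Z)$, plus a filling $V_2$ of the exponentially small projected cycle $\pi_\#Z$ obtained from Wenger's Euclidean-type isoperimetric inequality \cite{wenger_euclidean}. If you want a route that avoids redoing these estimates, the place to look is this kind of explicit projection-plus-cylinder construction, not a general theorem above the asymptotic rank.
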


In \cite{gromov_asympt}, Gromov stated this result for Lipschitz chains in the case when \(M\) is symmetric. There he also gave a sketch of proof. However, according to the best of my knowledge, there is only one complete proof in the literature which is due to Leuzinger (Theorem 1 in \cite{leuzinger}). We give a sketch of an alternative proof in Remark \ref{symmcase}. In Remark \ref{chains} we outline how Theorem \ref{main_thm} can be proven for Lipschitz chains.\\

We will need several lemmas for the proof, but before turning to them, let us outline the path we are going to take and introduce notation: Let \(M\) be a homogeneous Hadamard manifold of dimension \(n\) and euclidean rank \(r\). Due to the de Rham decomposition theorem, \(M\) can be written as a Riemannian product \(M=M_0\times M_1\) where \(M_0\) is euclidean and \(M_1\) is a homogeneous Hadamard manifold which does not contain a euclidean de Rham factor (see 2.3 in \cite{azencott_wilson_1}). We will apply the structure theory for homogeneous Hadamard manifolds to \(M_1\). According to it (see section 4.1 in \cite{heber}), \(M_1\) is isometric to a Lie group \(G\) with a left invariant metric. Further, \(G = N\rtimes A\) where \(A\) is an abelian Lie subgroup of \(G\) and \(N\) is a nilpotent normal Lie subgroup. We let \(\fa\) and \(\fn\) denote the Lie algebra of \(A\) and \(N\), respectively. The cosets \(nA\), \(n\in N\), are maximal flats of \(G\) which meet \(N\) orthogonally. Let \(\pi_N: G\rightarrow N\) be the projection given by \(\pi_N(s) = n\) for \(s = na\). We are going to construct a set \(S \subseteq G\) such that for every \(\rho > 0\), the set \(S \ssm (N\cdot B(e,\rho))\) contains arbitrarily large balls and such that the projection \(\pi_N\) restricted to \(S\) contracts \(r_1\)-dimensional volume exponentially. Here \(r_1 = r - \dim(M_0)\) denotes the euclidean rank of \(M_1\). Since \(\spt(Z)\) is compact we can assume that it is contained in \(M_0\times S\). Then we will project \(Z\) down to \(M_0\times N\) via 
\[
\pi: M \longrightarrow M_0\times N, \qquad x = (p,s) \longmapsto (p,\pi_N(s))
\]
and use the mapping cylinder plus a suitable filling of the projected cycle as a filling. To make this precise, let \(\sigma: [0,1]\times M\times M\rightarrow M\) denote the geodesic bicombing of \(M\) (i.e. \(t\mapsto \sigma(t,p,q)\) is the geodesic from \(p\) to \(q\)) and let 
\[
\varphi: [0,1]\times M\longrightarrow M, \qquad (t,x) \longmapsto \sigma(t,\pi(x),x).
\] 
Then \(V\coloneqq V_1 + V_2\) is our desired filling where \(V_1 \coloneqq \varphi_\# ([[0,1]]\times Z)\) and \(V_2\) is the suitable filling of \(\pi_\#Z\). This approach is
similar to and inspired by the one proposed by Gromov and the one which Leuzinger takes, in the symmetric space case. Gromov says that one should project onto a maximal flat
and take the mapping cylinder while Leuzinger projects onto a horosphere. The submanifold
\(N\subseteq G\) is in fact the intersection of a family of horospheres (see Theorem 4.2 in \cite{wolter}).\\

Let \(\varphi_G:[0,1]\times G\rightarrow G\), \(\varphi_G(t,x)\coloneqq \sigma_G(t,\pi_N(x),x)\) where \(\sigma_G\) denotes the geodesic bicombing of \(G\). We begin by computing the derivative of \(\varphi_G\).

\begin{lemma}
Let \(s \in G\), \(n\coloneqq \pi_N(s)\) and \(c:[0,1]\rightarrow G\) be the geodesic such that \(c(0) = n\), \(c(1) = s\). Then for any \(v \in T_sG\) there exists a unique \(N\)-Jacobi field \(Y\) along \(c\) such that \(Y(1) = v\). Further, for that Jacobi field we have \(D\varphi_G(t,x)(0,v) = Y(t)\) for every \(t\in [0,1]\).
\end{lemma}

\begin{proof}
It is clear that the normal exponential map of \(N\) is a diffeomorphism so there are no focal points of \(N\) and hence the existence and uniqueness follow. Now, let \(Y\) be an \(N\)-Jacobi field along \(c\) such that \(Y(1) = v\). Then \(Y\) is given by \(Y(t) = D\gamma(0,t)\frac{\partial}{\partial s}\) where \(\gamma\) is a geodesic variation of \(c\) such that \(s\mapsto \gamma(s,0)\) is a curve in \(N\) and for every \(s\) the geodesic \(t\mapsto \gamma(s,t)\) has initial speed orthogonal to \(N\). Since \(t\mapsto \gamma(s,t)\) is a left translation of \(c\) by \(\gamma(s,0)\), it is clear that \(\varphi_G(t,\gamma(s,1)) = \gamma(s,t)\) for all \(s,t\). Differentiating with respect to \(s\) gives:
\[
D\varphi_G(t,x)(0,Y(1)) = D\varphi_G(t,\gamma(0,1))\left(0,D\gamma(0,1)\frac{\partial}{\partial s}\right) = D\gamma(0,t)\frac{\partial}{\partial s} = Y(t)
\]
so we are done.
\end{proof}

For \(g\in G\), let \(L_g : G \rightarrow G\), \(L_g(h) = gh\) denote the left translation of \(G\) by \(g\) and \(R_g: G \rightarrow G\), \(R_g(h) = hg\) denote the right translation of \(G\) by \(g\).

\begin{lemma}\label{jacobi_field_lemma}
Let \(s \in G\), \(n\coloneqq \pi_N(s)\) and \(c:\R\rightarrow G\) be the geodesic such that \(c(0) = n\), \(c(1) = s\); write \(c(t) = n\exp(tH)\) where \(H\in \fa\). Then every \(N\)-Jacobi field along \(c\) can be written as
\[
Y(t) = DL_{c(t)}(e)\left[t\xi + \Ad(\exp(-tH))X\right]
\]
where \(\xi \in \fa\) and \(X \in \fn\). 
\end{lemma}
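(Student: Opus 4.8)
The plan is to realise an arbitrary $N$-Jacobi field along $c$ as the variation field of an explicit geodesic variation and then differentiate. By the characterisation of $N$-Jacobi fields used in the preceding lemma, every $N$-Jacobi field $Y$ along $c$ is of the form $Y(t)=\frac{\partial}{\partial s}\big|_{s=0}\gamma(s,t)$ for some smooth geodesic variation $\gamma$ with $\gamma(0,\cdot)=c$, with $\gamma(s,0)\in N$, and with each $t\mapsto\gamma(s,t)$ a geodesic leaving $N$ orthogonally. So the first step is to pin down the shape of such a variation.

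Here is the structural input I would use. Since $nA$ meets $N$ orthogonally at $n$ and the metric is left invariant, we have $\fn\perp\fa$ in $\fg$, hence the normal space of $N$ at a point $m_0\in N$ is $DL_{m_0}(e)\fa$. Moreover, for $H'\in\fa$ the one-parameter subgroup $t\mapsto\exp(tH')$ is a geodesic of $G$: this is the condition $\langle H',[H',W]\rangle=0$ for all $W\in\fg$, which holds because $[H',W]\in\fn$ (as $\fn$ is an ideal and $\fa$ is abelian) while $H'\perp\fn$. Applying the isometry $L_{m_0}$, the curve $t\mapsto m_0\exp(tH')$ is the geodesic through $m_0$ with velocity $DL_{m_0}(e)H'\in(T_{m_0}N)^\perp$, and these exhaust the geodesics leaving $N$ orthogonally at $m_0$. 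Consequently the variation above can be written $\gamma(s,t)=n\,m(s)\exp(tH(s))$ with $m(s)\in N$, $m(0)=e$, $H(s)\in\fa$, $H(0)=H$, both smooth in $s$. Put $X:=\dot m(0)\in T_eN=\fn$ and $\xi:=\dot H(0)\in\fa$.

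It then remains to differentiate at $s=0$. Writing $\gamma(s,t)=L_n\big(m(s)\exp(tH(s))\big)$, using the product rule on $s\mapsto m(s)\exp(tH(s))$ (whose factors at $s=0$ are $e$ and $\exp(tH)$), and using that $\exp|_{\fa}$ is a homomorphism so that $\frac{\partial}{\partial s}\big|_{0}\exp(tH(s))=DL_{\exp(tH)}(e)(t\xi)$, one obtains
\[
Y(t)=DL_n(\exp(tH))\Big[\,DR_{\exp(tH)}(e)X+t\,DL_{\exp(tH)}(e)\xi\,\Big].
\]
Pulling $DL_{c(t)}(e)=DL_n(\exp(tH))\circ DL_{\exp(tH)}(e)$ out in front and using the identity $DL_g(e)^{-1}DR_g(e)=\Ad(g^{-1})$ with $g=\exp(tH)$ turns the first summand into $DL_{c(t)}(e)\Ad(\exp(-tH))X$ and the second into $DL_{c(t)}(e)(t\xi)$, which gives $Y(t)=DL_{c(t)}(e)\big[t\xi+\Ad(\exp(-tH))X\big]$. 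As a check, $\Ad(\exp(-tH))X\in\fn$ since $\fn$ is an ideal, and as $(\xi,X)$ ranges over $\fa\oplus\fn=\fg$ these fields span an $n$-dimensional space, so they indeed account for all $N$-Jacobi fields along $c$.

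I expect the main obstacle to be the structural step, namely verifying that the geodesics leaving $N$ orthogonally are precisely the left translates of one-parameter subgroups of $A$; everything afterward is routine bookkeeping with differentials of left and right translations.
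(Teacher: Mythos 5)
Your proof is correct and follows essentially the same route as the paper's: you realise the \(N\)-Jacobi field as the variation field of a variation \(n\,m(s)\exp(tH(s))\) along \(N\), differentiate at \(s=0\), and convert the right-translation term via \(DR_g(e)=DL_g(e)\Ad(g^{-1})\). The only real difference is that you justify explicitly (via the left-invariant geodesic criterion and the orthogonality \(\fa\perp\fn\)) why the geodesics leaving \(N\) orthogonally are exactly the curves \(m_0\exp(tH')\), a step the paper treats as clear.
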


\begin{proof}
We know that \(Y\) is given by \(Y(t) = D\gamma(0,t)\frac{\partial}{\partial s}\) where \(\gamma\) is a geodesic variation of \(c\) along \(N\). It is clear that \(\gamma\) can be written as \(\gamma(s,t) = \Tilde{n}(s)\exp(tH(s))\) where \(\Tilde{n}\) is a curve in \(N\) with \(\Tilde{n}(0) = n\) and \(\Tilde{H}\) is a curve in \(\fa\) such that \(\Tilde{H}(0) = H\). Let \(X\in \fn\) be such that \(\Tilde{n}'(0) = DL_n(e)X\) and \(\xi\coloneqq \Tilde{H}'(0) \in \fa\). Then we get
\begin{align*}
Y(t) &= tDL_n(\exp(tH))D\exp(tH)\Tilde{H}'(0) + DR_{\exp(tH)}(n)\Tilde{n}'(0)\\
&= tDL_n(\exp(tH))DL_{\exp(tH)}(e)\xi + DR_{\exp(tH)}(n)DL_n(e)X\\
&= tDL_{n\exp(tH)}(e)\xi + DL_{n\exp(tH)}(e)\Ad(\exp(-tH))X\\
&= DL_{c(t)}(e)\left[t\xi + \Ad(\exp(-tH))X\right]
\end{align*}
where we applied the formula for the derivative of the exponential map in the second step and 
\begin{align*}
DR_{\exp(tH)}(n)DL_n(e) &= DL_n(\exp(tH))DR_{\exp(tH)}(e)\\
&= DL_n(\exp(tH))DL_{\exp(tH)}(e)\Ad(\exp(-tH))\\
&= DL_{n\exp(tH)}(e)\Ad(\exp(-tH)).
\end{align*}
in the third step. This finishes the proof.
\end{proof}

Before we continue we need some more facts about the structure theory of \(G\) from Azencott and Wilson \cite{azencott_wilson_1}, \cite{azencott_wilson_2}. We adopt the formulation of Heber in Proposition 4.2 in \cite{heber}: There exists a decomposition of \(\fn\) into mutually orthogonal spaces \(\fn_1,\ldots,\fn_m\) which is preserved by every \(\ad(H)\), \(H\in \fa\). For every \(j=1,\ldots,m\) there exist a non-trivial linear functional \(\mu_j: \fa\rightarrow \R\) and a symmetric strictly positive definite linear operator \(D_j\) of \(\fn_j\) such that \(\frac{1}{2}(\ad(H)+\ad(H)^T)N_j = \mu_j(H)D_jN_j\) for every \(N_j\in \fn_j\) and every \(H\in \fa\). There exists \(H_+\in \fa\) such that \(\mu_j(H_+) > 0\) for every \(j=1,\ldots,m\).

\begin{lemma}\label{conelemma}
There exists a constant \(\varepsilon > 0\) and a set \(S \subseteq G\) with non-empty interior such that the following holds: Let \(s\in S\), \(n\coloneqq \pi_N(s)\) and \(c:\R\rightarrow G\), \(c(t) = n\exp(tH)\) where \(H\in \fa\), be the geodesic such that \(c(0) = n\) and \(c(1) = s\). Then \(c([0,\infty[)\subseteq S\) and \(\mu_j(H)\leq -\varepsilon \|H\|\) for all \(j=1,\ldots,m\).
\end{lemma}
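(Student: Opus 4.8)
The plan is to build $S$ as a "cone" based at the identity, opening in the direction $-H_+$ (the opposite of the vector $H_+$ provided by the structure theory), and intersected suitably with $N\cdot A$. Concretely, fix a small angle and let
\[
C \coloneqq \{ H \in \fa \mid \langle H, H_+\rangle \leq -\delta\|H\|\|H_+\| \}
\]
be a closed convex cone around the ray $\R_{\geq 0}(-H_+)$, where $\delta>0$ is chosen below. Since each $\mu_j$ is a nonzero linear functional with $\mu_j(H_+)>0$, and since $\mu_j(-H_+)<0$, a compactness argument on the unit sphere of $\fa$ shows that for $\delta$ close enough to $1$ there is $\varepsilon>0$ with $\mu_j(H)\leq -\varepsilon\|H\|$ for all $H\in C$ and all $j$; this is exactly the second conclusion we want, and it fixes $\varepsilon$ and $\delta$. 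Now set
\[
S \coloneqq \{\, n\exp(H) \mid n \in N,\ H \in C \,\} = N\cdot\exp(C).
\]
Because $C$ has non-empty interior in $\fa$ and $N$ is a Lie subgroup of full complementary dimension, $S = N\cdot\exp(C)$ has non-empty interior in $G$.

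The two things left to check are that $S$ is closed under following the relevant geodesic rays, and that the geodesics referenced in the statement are exactly the cosets $n\exp(tH)$. For the latter: for $s\in S$ write $s = n\exp(H)$ with $H\in C$; by definition of $\pi_N$ we have $\pi_N(s) = n$, and the cosets $nA$ are totally geodesic flats meeting $N$ orthogonally (this was recalled in the outline), so the geodesic $c$ with $c(0)=n$, $c(1)=s$ is precisely $c(t) = n\exp(tH)$ — in particular this identifies the $H\in\fa$ in the statement with the one in the definition of $S$. For the former: for $t\geq 0$ we have $c(t) = n\exp(tH)$ with $n\in N$ and $tH\in C$ (since $C$ is a cone, hence closed under multiplication by $t\geq 0$), so $c(t)\in S$. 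Hence $c([0,\infty[)\subseteq S$, and $\mu_j(H)\leq -\varepsilon\|H\|$ for all $j$ by the choice of $C$. This completes the proof.

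The only real subtlety — and the step I'd be most careful about — is the uniform bound $\mu_j(H)\leq -\varepsilon\|H\|$ on the whole cone $C$: one needs that the finitely many functionals $\mu_j$ are \emph{simultaneously} negative and bounded away from $0$ on the unit sphere of $C$. This is where the role of $H_+$ is essential: the set $\{H\in\fa : \|H\|=1,\ \mu_j(H)<0\ \forall j\}$ is a non-empty open subset of the sphere containing $-H_+/\|H_+\|$, so it contains a closed spherical cap around that point, and on that compact cap $\max_j \mu_j$ attains a negative maximum $-\varepsilon$. Choosing $\delta$ so that $C\cap\{\|H\|=1\}$ lies inside this cap does the job. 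Everything else — non-empty interior of $S$, the description of geodesics in $nA$, invariance of $C$ under positive scaling — is routine given the structure theory quoted before the lemma and the remark that the flats $nA$ meet $N$ orthogonally.
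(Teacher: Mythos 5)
Your proposal is correct and follows essentially the same route as the paper: the paper also takes a cone \(C_0\subseteq\fa\) with apex at the origin on which all \(\mu_j(H)\leq-\varepsilon\|H\|\) (existence being asserted as clear from \(\mu_j(H_+)>0\)) and sets \(S=\bigcup_{n\in N}n\exp(C_0)\). You merely spell out the details the paper leaves implicit — the compactness argument on the spherical cap around \(-H_+/\|H_+\|\), the identification of the geodesics with \(t\mapsto n\exp(tH)\), and the scaling invariance of the cone.
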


\begin{proof}
As there exists an \(H_+\in \fa\) such that \(\mu_j(H_+) > 0\) for every \(j=1,\ldots,m\) it is clear that there exists a constant \(\varepsilon > 0\) and a cone \(C_0\subseteq \fa\) with non-empty interior and apex at the origin, such that \(\mu_j(H) \leq -\varepsilon \|H\|\) for every \(j=1,\ldots,m\) and every \(H\in C_0\). Let \(S\coloneqq \bigcup_{n\in N}n\exp(C_0)\). It is clear that \(\varepsilon\) and \(S\) satisfy the properties of the lemma.
\end{proof}

From now on, \(S\) and \(\varepsilon\) will always denote the set and constant from Lemma \ref{conelemma}.

\begin{lemma}\label{arblarge}
For every \(\rho > 0\) and every compact set \(K\subseteq G\), there exists a left translate of \(K\) which is contained in \(S\ssm (N\cdot B(e,\rho))\).
\end{lemma}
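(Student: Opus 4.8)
The plan is to exhibit the required translate explicitly: I would left-translate $K$ by the element $\exp(tH_*)\in A$, where $H_*$ lies deep inside the cone $C_0$ used to build $S$ in the proof of Lemma~\ref{conelemma} and $t>0$ is large. Recall that $S=\bigcup_{n\in N}n\exp(C_0)$ with $C_0\subseteq\fa$ a cone with apex at the origin and non-empty interior. Fix $H_*\in\inn(C_0)$ and $\delta>0$ with $B(H_*,\delta)\subseteq C_0$; since $C_0$ is a cone, $B(tH_*,t\delta)=t\,B(H_*,\delta)\subseteq tC_0=C_0$ for every $t>0$. Using that $(n,H)\mapsto n\exp(H)$ is a diffeomorphism $N\times\fa\to G$, write each $x\in K$ as $x=n(x)\exp(H(x))$ with $n(x)\in N$ and $H(x)\in\fa$; by compactness $R\coloneqq\sup_{x\in K}\|H(x)\|<\infty$. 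Then I would choose $t$ with $t\delta>R$ and $t\|H_*\|>R+\rho$, set $g\coloneqq\exp(tH_*)$, and show $gK\subseteq S\ssm(N\cdot B(e,\rho))$.

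Fix $x\in K$. Since $N$ is normal in $G$ and $\fa$ is abelian,
\[
gx=\exp(tH_*)\,n(x)\exp(H(x))=\bigl(\exp(tH_*)\,n(x)\exp(-tH_*)\bigr)\exp\bigl(tH_*+H(x)\bigr)=n'(x)\exp\bigl(tH_*+H(x)\bigr),
\]
where $n'(x)\in N$. As $tH_*+H(x)\in B(tH_*,t\delta)\subseteq C_0$, this exhibits $gx$ as a point of $n'(x)\exp(C_0)\subseteq S$; moreover it is the $N\rtimes A$ decomposition of $gx$, so $\pi_N(gx)=n'(x)$ and the geodesic from $\pi_N(gx)$ to $gx$ is $u\mapsto n'(x)\exp\bigl(u(tH_*+H(x))\bigr)$, of length $\|tH_*+H(x)\|$.

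It remains to check that $gx\notin N\cdot B(e,\rho)$. By left invariance of the metric this is equivalent to $d(gx,N)\geq\rho$, since $gx=nb$ with $n\in N$ and $d(e,b)<\rho$ would give $d(gx,n)=d(b,e)<\rho$. To get the lower bound I would first record the general fact that $d(s,N)=\|H\|$ whenever $s=\pi_N(s)\exp(H)$ with $H\in\fa$ (the case $H=0$ being trivial): on the Hadamard manifold $G$ the function $d(s,\cdot)$ is proper and smooth away from $s$, hence $d(s,\cdot)|_N$ attains its minimum at some $m_0\in N$, and by the first variation formula the minimizing geodesic from $m_0$ to $s$ is orthogonal to $N$ at $m_0$; but the geodesic $u\mapsto\pi_N(s)\exp(uH)$ is likewise a normal geodesic of $N$, of length $\|H\|$, ending at $s$, so by injectivity of the normal exponential map of $N$ the two coincide, giving $m_0=\pi_N(s)$ and $d(s,N)=\|H\|$. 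Applying this with $s=gx$ yields $d(gx,N)=\|tH_*+H(x)\|\geq t\|H_*\|-R>\rho$, which completes the verification, and hence the proof.

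The only step requiring real care is the auxiliary identity $d(s,N)=\|H\|$: the bound $d(s,N)\leq\|H\|$ is immediate from the path $u\mapsto\pi_N(s)\exp(uH)$, but the reverse inequality — that this normal geodesic actually realizes the distance from $s$ to $N$ — is what converts the (easy) upper bound on $d(gx,N)$ into the needed lower bound, and it is precisely here that one uses that the normal exponential map of $N$ is a diffeomorphism. The other ingredients (normality of $N$, scaling-invariance of $C_0$, and the uniform bound $R$) are routine.
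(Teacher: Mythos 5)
Your proof is correct and follows essentially the same route as the paper's: left-translate $K$ by an element of $A$ lying deep in $\exp(C_0)$ and use the normality of $N$ to see that the translate lands in $S$ outside the $\rho$-neighbourhood of $N$. The only difference is that you spell out a detail the paper leaves implicit, namely that $d(s,N)=\|H\|$ for $s=\pi_N(s)\exp(H)$, which you justify correctly via the first variation formula and the injectivity of the normal exponential map of $N$.
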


\begin{proof}
Fix \(\rho > 0\). Consider the projection \(\pi_A:G \rightarrow A\), \(\pi_A(s) = a\) for \(s=na\). The set \(\pi_A(K)\) is compact and by how \(S\) is constructed, we can find \(a_0\in A\) such that \(a_0\pi_A(K)\subseteq \pi_A(S)\ssm B(e,\rho)\). Now, \(K\subseteq N\pi_A(K)\) and hence 
\[
a_0K\subseteq a_0N\pi_A(K) = Na_0\pi_A(K) \subseteq S \ssm (N\cdot B(e,\rho))
\]
where we used the normality of \(N\) in the second step.
\end{proof}

Now we turn to establishing contraction properties. For that we must analyse the growth of \(N\)-Jacobi fields. So let \(Y\) be an \(N\)-Jacobi field given by the formula from Lemma \ref{jacobi_field_lemma}. Write \(X = \sum_{j=1}^m X_j\) where \(X_j\in \fn_j\). Using that \(\ad(H)\) preserves the decomposition of \(\fn\), we get
\begin{equation}\label{derivative}
\begin{split}
\frac{1}{2}\frac{d}{dt}\|Y(t)\|^2 &= t\|\xi\|^2 + \sum_{j=1}^m\langle \Ad(\exp(-tH))X_j,-\ad(H)\Ad(\exp(-tH))X_j\rangle\\
&= t\|\xi\|^2 - \sum_{j=1}^m\mu_j(H)\langle \Ad(\exp(-tH))X_j,D_j\Ad(\exp(-tH))X_j\rangle.
\end{split}
\end{equation}

\begin{remark}\label{lipschitz}
We see immediately from \eqref{derivative} that if \(\mu_j(H) \leq 0\) for every \(j=1,\ldots,m\) then \(\|Y(t)\|^2\) and hence \(\|Y(t)\|\) is non-decreasing for \(t \geq 0\). This holds in particular if \(c(1)\in S\) where \(S\) is the set from Lemma \ref{conelemma}. It follows that the projection map \(\pi: M\rightarrow M_0\times N\) is \(1\)-Lipschitz when restricted to \(M_0\times S\).
\end{remark}

Compare the following lemma with Lemma 2.1 in \cite{wolter}.

\begin{lemma}\label{growthlemma}
There exists a constant \(\lambda > 0\) such that the following holds: Let \(c(t)\coloneqq n\exp(tH)\) where \(n\in N\) and \(H\in \fa\), be a geodesic such that \(c(1)\in S\). Let \(Y\) be an \(N\)-Jacobi field along \(c\) of the form 
\[
Y(t) = DL_{c(t)}(e)\Ad(\exp(-tH))X
\]
where \(X\in \fn\). Then
\[
\|Y(t+s)\|\geq e^{\lambda t\|H\|}\|Y(s)\|
\]
for all \(s,t\geq 0\).
\end{lemma}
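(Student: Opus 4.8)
The plan is to turn the statement into a first-order differential inequality for the function $t\mapsto\|Y(t)\|^2$ and then integrate it. Since the Jacobi field under consideration is the one with vanishing $\xi$-component in the notation of Lemma~\ref{jacobi_field_lemma}, the identity \eqref{derivative} specializes to
\[
\frac{1}{2}\frac{d}{dt}\|Y(t)\|^2 = -\sum_{j=1}^m\mu_j(H)\,\langle \Ad(\exp(-tH))X_j,\, D_j\Ad(\exp(-tH))X_j\rangle,
\]
where $X=\sum_{j=1}^m X_j$ with $X_j\in\fn_j$. First I would also record that, by left-invariance of the metric and mutual orthogonality of the $\fn_j$, one has $\|Y(t)\|^2=\sum_{j=1}^m\|\Ad(\exp(-tH))X_j\|^2$, so that both sides above are \emph{diagonal} with respect to the decomposition $\fn=\fn_1\oplus\cdots\oplus\fn_m$.

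Next I would insert two uniform lower bounds into this identity. Since $c(1)\in S$, Lemma~\ref{conelemma} gives $\mu_j(H)\leq -\varepsilon\|H\|$, hence $-\mu_j(H)\geq\varepsilon\|H\|\geq 0$ for every $j$. And since each $D_j$ is symmetric and strictly positive definite on the finite-dimensional space $\fn_j$, there is a constant $\delta>0$ — the minimum over $j$ of the smallest eigenvalue of $D_j$, which depends only on the manifold — with $\langle Z,D_jZ\rangle\geq\delta\|Z\|^2$ for all $Z\in\fn_j$. Combining these,
\[
\frac{1}{2}\frac{d}{dt}\|Y(t)\|^2 \;\geq\; \varepsilon\|H\|\sum_{j=1}^m\delta\,\|\Ad(\exp(-tH))X_j\|^2 \;=\; \varepsilon\delta\,\|H\|\,\|Y(t)\|^2 .
\]

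Finally I would set $\lambda\coloneqq\varepsilon\delta$, a constant that depends only on $M$ and not on $c$ or $X$, and integrate the inequality $\frac{d}{dt}\|Y(t)\|^2\geq 2\lambda\|H\|\,\|Y(t)\|^2$: it says precisely that $t\mapsto e^{-2\lambda\|H\|t}\|Y(t)\|^2$ is non-decreasing, so for $s,t\geq 0$ we obtain $e^{-2\lambda\|H\|(s+t)}\|Y(s+t)\|^2\geq e^{-2\lambda\|H\|s}\|Y(s)\|^2$, i.e. $\|Y(s+t)\|^2\geq e^{2\lambda t\|H\|}\|Y(s)\|^2$; taking square roots gives the claim. (If $X=0$ the field vanishes identically and there is nothing to prove; otherwise $Y$ is nowhere zero since $\Ad(\exp(-tH))$ and the differentials $DL_{c(t)}(e)$ are isomorphisms, so dividing is legitimate.) I do not expect a real obstacle here: the only point deserving attention is that $\lambda$ can be taken uniform over all admissible geodesics, and this is immediate because $\varepsilon$ is the fixed constant of Lemma~\ref{conelemma} and the operators $D_j$ are fixed once and for all by the structure theory.
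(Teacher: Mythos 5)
Your proof is correct and follows essentially the same route as the paper: specialize the derivative identity \eqref{derivative} to the case $\xi=0$, bound $-\mu_j(H)\geq\varepsilon\|H\|$ via Lemma \ref{conelemma} and $\langle Z,D_jZ\rangle\geq\delta\|Z\|^2$ via a uniform lower eigenvalue bound for the $D_j$, and integrate the resulting differential inequality with $\lambda=\varepsilon\delta$ (the paper's $a\varepsilon$). The only difference is cosmetic: you make explicit the orthogonality/left-invariance facts and the trivial case $X=0$, which the paper leaves implicit.
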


\begin{proof}
Let \(a > 0\) be a constant such that for each operator \(D_j\), every eigenvalue of \(D_j\) is \(\geq a\). Then \(\langle N_j,D_jN_j\rangle \geq a\|N_j\|^2\) for every \(N_j\in \fn_j\). By Lemma \ref{conelemma}, we have that \(\mu_j(H)\leq -\varepsilon\|H\|\) for every \(\mu_j\). Hence, we get from \eqref{derivative} that
\[
\frac{d}{dt}\|Y(t)\|^2 \geq \sum_{j=1}^m 2a\varepsilon \|H\|\|\Ad(\exp(-tH)X_j\|^2 \geq 2a\varepsilon \|H\|\|Y(t)\|^2.
\]
Multiplying through this inequality by \(e^{-2a\varepsilon\|H\|}\) gives that \(e^{-2a\varepsilon\|H\|}\|Y(t)\|^2\) is non-decreasing in \(t\) so the same holds for \(e^{-a\varepsilon\|H\|}\|Y(t)\|\). Thus \(\|Y(t+s)\|\geq e^{\lambda t\|H\|}\|Y(s)\|\) with \(\lambda \coloneqq a\varepsilon > 0\). 
\end{proof}

\begin{lemma}\label{voldistlemma}
Let \(x = (x_0,s)\in M_0\times S\), \(p\coloneqq \pi(x)\) and let \(c:\R\rightarrow M\) be the geodesic such that \(c(0) = p\) and \(c(1) = x\). Let \(v_1,\ldots,v_k \in T_xM\) be orthonormal vectors which are all orthogonal to \(c'(1)\), \(k\geq r\). Then the map \(\varphi(t,x) = \sigma(t,\pi(x),x)\) satisfies
\[
\det\left(\langle D\varphi(t,x)(0,v_i), D\varphi(t,x)(0,v_j)\rangle \right)^{1/2} \leq e^{-\lambda(1-t)\|H\|}.
\]
\end{lemma}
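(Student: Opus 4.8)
The plan is to push the computation into the Lie group factor $G$, to diagonalise the differential of $\varphi$ into a ``flat'' block (directions tangent to the flats $nA$) and a ``nilpotent'' block, using Lemma~\ref{jacobi_field_lemma} to identify the two blocks and Lemma~\ref{growthlemma} to control the second, and then to bound the $k$-dimensional Jacobian by a singular-value count in which the hypotheses $k\ge r$ and $v_i\perp c'(1)$ leave room for a factor $e^{-\lambda(1-t)\|H\|}$.

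Since $M=M_0\times M_1$ with $M_1=G$, the bicombing of $M$ is the product of those of $M_0$ and $G$; hence $\varphi(t,(x_0,s))=(x_0,\varphi_G(t,s))$, the geodesic $c$ has constant $M_0$-component, and the linear map $\Phi_t:=D\varphi(t,x)(0,\cdot)\colon T_xM\to T_{\varphi(t,x)}M$ is the identity on $T_{x_0}M_0$ while on $T_sG$ it sends $w$ to $Y(t)$, the $N$-Jacobi field along the $G$-component $c_G$ of $c$, where $c_G(t)=n\exp(tH)$, with $Y(1)=w$ (this is the first lemma of the note). Split $T_sG=F\oplus\mathcal N$ with $F:=DL_s(e)[\fa]$ and $\mathcal N:=DL_s(e)[\fn]$; this is orthogonal because $\fa\perp\fn$ and $DL_s(e)$ is a linear isometry. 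Matching $Y(1)=w=DL_s(e)[\zeta_\fa+\zeta_\fn]$ in the formula of Lemma~\ref{jacobi_field_lemma} forces $\xi=\zeta_\fa$ and $X=\Ad(\exp(H))\zeta_\fn\in\fn$, so $Y(t)=DL_{c_G(t)}(e)[\,t\zeta_\fa+\Ad(\exp(-tH))X\,]$; thus $\Phi_t|_F$ is $t$ times a linear isometry onto $DL_{c_G(t)}(e)[\fa]$, and $\Phi_t(\mathcal N)\subseteq DL_{c_G(t)}(e)[\fn]$, the subspaces $T_{x_0}M_0$, $DL_{c_G(t)}(e)[\fa]$, $DL_{c_G(t)}(e)[\fn]$ of $T_{\varphi(t,x)}M$ being pairwise orthogonal. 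On $\mathcal N$ the $\fa$-component of $Y$ vanishes, so $Y$ has the form to which Lemma~\ref{growthlemma} applies, and that lemma, evaluated between parameters $t$ and $1$, gives $\|Y(1)\|\ge e^{\lambda(1-t)\|H\|}\|Y(t)\|$, i.e. $\|\Phi_tw\|\le e^{-\lambda(1-t)\|H\|}\|w\|$ for all $w\in\mathcal N$.

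If $H=0$ then $c$ is constant, $\Phi_t=\id$, and the claimed inequality reduces to $\le1$; so assume $H\ne0$. Then $c'(1)$ has trivial $M_0$-component and $G$-component $DL_s(e)[H]\in F$, so write $F=\R c'(1)\oplus F'$ and put $U_1:=T_{x_0}M_0\oplus F'$ and $U_2:=\mathcal N$. Each $v_i\perp c'(1)$, so $W:=\operatorname{span}(v_1,\dots,v_k)\subseteq U_1\oplus U_2$; and by the previous paragraph $\Phi_t(U_1)\perp\Phi_t(U_2)$, $\|\Phi_tu\|\le\|u\|$ for $u\in U_1$ (hence all singular values of $\Phi_t|_{U_1\oplus U_2}$ are $\le1$), and $\|\Phi_tu\|\le e^{-\lambda(1-t)\|H\|}\|u\|$ for $u\in U_2$. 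The crucial input is the dimension bound
\[
\dim U_1=\dim M_0+(\dim\fa-1)=(r-r_1)+(\dim\fa-1)\le r-1,
\]
where $\dim\fa\le r_1$ because $nA$ is a flat of $M_1$ of dimension $\dim\fa$.

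To conclude, put $\mu:=e^{-\lambda(1-t)\|H\|}$. Any subspace of $U_1\oplus U_2$ on which $\Phi_t$ strictly dilates lengths by more than $\mu$ must meet $U_2$ trivially, hence has dimension $\le\dim U_1$; so at most $\dim U_1\le r-1$ singular values of $\Phi_t|_{U_1\oplus U_2}$ exceed $\mu$. Since $k\ge r>\dim U_1$ and every singular value is $\le1$, the product of the $k$ largest of them is $\le\mu$. Finally $\omega:=v_1\wedge\cdots\wedge v_k$ is a unit vector of $\bigwedge^{k}(U_1\oplus U_2)$, and $\det(\langle D\varphi(t,x)(0,v_i),D\varphi(t,x)(0,v_j)\rangle)^{1/2}=\|(\bigwedge^{k}\Phi_t)\omega\|$ is at most the operator norm of $\bigwedge^{k}\Phi_t|_{U_1\oplus U_2}$, namely that product of $k$ singular values, which is $\le\mu$. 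The one genuinely delicate step is the setup in the middle paragraphs --- arranging the splittings of $T_sG$ and of the $N$-Jacobi fields so that the non-contracting block $U_1$ is simultaneously of norm $\le1$ and, after deleting the direction $c'(1)$, of dimension at most $r-1$; together with $k\ge r$ this is precisely what forces the surviving factor $\mu$.
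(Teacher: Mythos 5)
Your proposal is correct and follows essentially the same route as the paper: the same identification \(D\varphi(t,x)(0,v)=Y(t)\) with the \(N\)-Jacobi fields of Lemma~\ref{jacobi_field_lemma}, the same exponential contraction from Lemma~\ref{growthlemma} on the \(\fn\)-block together with non-expansion on the \(M_0\oplus\fa\)-block (the paper gets the latter from Remark~\ref{lipschitz}), and the same dimension count exploiting \(k\ge r\) and \(v_i\perp c'(1)\). The only difference is the finish --- the paper rotates the orthonormal frame so that one vector lies in the contracted nilpotent directions and applies Hadamard's inequality to the Gram determinant, whereas you count singular values of \(\Phi_t\) and bound \(\bigl\|\bigl(\textstyle\bigwedge^{k}\Phi_t\bigr)\omega\bigr\|\) by the product of the \(k\) largest --- and your one slip is immaterial: when \(H=0\) the map \(\Phi_t\) is not the identity (it scales the \(\fa\)-directions by \(t\)) but it is still non-expanding, so the bound \(\le 1\) you need in that case still holds.
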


\begin{proof}
Write \(c(t) = (x_0,c_1(t))\) where \(c_1(t)\coloneqq n\exp(tH)\), \(H\in\fa\). The space of vectors \(DL_{c_1(1)}(e)\Ad(\exp(-H))X\), \(X\in \fn\), intersects the span of \(v_1,\ldots,v_k\) non-trivially, so we may assume that \(v_1\) has this form. For \(i=1,\ldots,k\), let \(Y_i\) be the \(N\)-Jacobi field along \(c_1\) such that \(Y_i(1) = v_i\). Then \(Y_1(t) = DL_{c_1(t)}(e)\Ad(\exp(-tH))X\) for some \(X\in \fn\). Now, by the last lemma, we have \(\|Y_1(t)\|\leq e^{-\lambda(1-t)\|H\|}\|Y(1)\| = e^{-\lambda(1-t)\|H\|}\). For \(i=2,\ldots,k\) we know that \(\|Y_i(t)\|\leq \|Y_i(1)\| = 1\) by Remark \ref{lipschitz}. Thus, using that \(D\varphi(t,x)(0,v_i) = Y_i(t)\), we obtain
\begin{align*}
\det\left(\langle D\varphi(t,x)(0,v_i),D\varphi(t,x)(0,v_j)\rangle \right)^{1/2} &\leq \prod_{i=1}^n\|D\varphi(t,x)(0,v_i)\| = \prod_{i=1}^n\|Y_i(t)\|\\
&\leq e^{-\lambda(1-t)\|H\|}.
\end{align*}
This proves the lemma.
\end{proof}

As a last step before turning to the proof of Theorem 1 we remark the following: Let \(K\subset \R^d\) be a compact set, \(\theta \in L^1(K,\Z)\) and \(F:K\rightarrow M\) a Lipschitz map. Then by (4.12) in \cite{wenger_filling}, the comments thereafter and Rademacher's theorem, we have
\begin{equation}\label{masseq}
\Mass(F_\#[[\theta]])\leq \int_A|\theta(z)|J_dF(z)d\sL^d(z).
\end{equation}
Here \(A\) is the set of points \(z\in K\) such that \(z\) is a Lebesgue point of \(K\) and \(F\) is differentiable at \(z\), \(J_dF(z)\) denotes the \(d\)-dimensional volume distortion factor of \(F\) at \(z\) and \(\sL^d\) denotes the \(d\)-dimensional Lebesgue measure. Further, if \(F\) is bi-Lipschitz, then equality holds in \eqref{masseq} (see (2.22) in \cite{wenger_filling}).

\begin{proof}[Proof of Theorem \ref{main_thm}]
Let \(Z\in \Zc[k](M)\). By Lemma \ref{arblarge}, we can assume that \(\spt(Z)\) is contained in \(M_0\times S\). Let us define \(V_1\coloneqq \varphi_\#([[0,1]]\times Z)\). As in the proof of Theorem 4.1 in \cite{wenger_filling}, we may assume, due to the decomposition theorem for currents, that \(Z = \psi_\#[[\theta]]\) for some bi-Lipschitz \(\psi: K \rightarrow M\) where \(K\subseteq \R^k\) is compact and \(\theta \in L^1(K,\Z)\). By (4.10) in \cite{wenger_filling} we have that \(V_1 = \Tilde{\varphi}_\#[[\Tilde{\theta}]]\) where \(\Tilde{\varphi}(t,z) \coloneqq \varphi(t,\psi(z))\) and \(\Tilde{\theta}(t,z) \coloneqq \theta(z)\). Hence we have by \eqref{masseq} that
\[
\Mass(V_1) \leq \int_{[0,1]\times A} |\theta(z)|J_{k+1}\Tilde{\varphi}(t,z)d\sL^{k+1}(t,z).
\]
Using Lemma \ref{voldistlemma}, it is easy to see that
\[
J_{k+1}\Tilde{\varphi}(t,z) \leq d(\psi(z),\pi(\psi(z)))e^{-\lambda d(\psi(z),\pi(\psi(z)))(1-t)}J_k\psi(z).
\]
Thus, we get
\begin{align*}
\Mass(V_1)& \leq \int_{[0,1]\times A}d(\psi(z),\pi(\psi(z)))e^{-\lambda d(\psi(z),\pi(\psi(z)))(1-t)}|\theta(z)|J_k\psi(z) d\sL^{k+1}(t,z)\\
&\leq \frac{1}{\lambda}\int_A\left(1-e^{-\lambda d(\psi(z),\pi(\psi(z)))}\right)|\theta(z)|J_k\psi(z) d\sL^k(z)\\
&\leq \frac{1}{\lambda}\int_A|\theta(z)|J_k\psi(z)d\sL^k(z)\\
&= \frac{1}{\lambda}\Mass(Z)
\end{align*}
where we used \eqref{masseq} with equality in the last step.\\

Let us now construct a filling \(V_2\) of \(\pi_\#Z\). Using the fact that for any \(x\in M\), the vectors in \(T_xM\) which are tangent to the geodesic from \(\pi(x)\) to \(x\) are sent to zero by \(v\mapsto D\pi(x)v = D\varphi(0,x)(0,v)\), it follows easily from Lemma \ref{voldistlemma} that 
\[
J_k(\pi\circ \psi)(z) \leq e^{-\lambda d(\psi(z),\pi(\psi(z)))}J_k\psi(z).
\]
Thus, we get from \eqref{masseq} that
\begin{align*}
\Mass(\pi_\#Z)&\leq \int_A|\theta(z)|J_k(\pi\circ\psi)(z)d\sL^k(z)\\
&\leq \int_A|\theta(z)|e^{-\lambda d(\psi(z),\pi(\psi(z)))}J_k\psi(z)d\sL^k(z)\\
&\leq \sup_{x\in \spt(Z)}e^{-\lambda d(x,\pi(x))}\,\Mass(Z).
\end{align*}
Since Lemma \ref{arblarge} allows us to assume that \(\spt(Z)\) is as far away from \(M_0\times N\) as wanted, we may assume that \(\Mass(\pi_\#Z) \leq 1\). Let \(V_2\) be a filling of \(\pi_\#Z\) satisfying \(\Mass(V_2)\leq \gamma_k\Mass(\pi_\#Z)^{(k+1)/k}\) where \(\gamma_k\) is the constant for the euclidean isoperimetric inequality for \(k\)-cycles in \(M\) (see Theorem 1.2 in \cite{wenger_euclidean}). Then \(\Mass(V_2)\leq \gamma_k\Mass(\pi_\#Z)^{(k+1)/k}\leq \gamma_k\Mass(\pi_\#Z)\leq \gamma_k\Mass(Z)\) where the last step followed from the fact that \(\pi\) is \(1\)-Lipschitz when restricted to \(M_0\times S\). Now, \(V\coloneqq V_1+V_2\) is a filling of \(Z\) and 
\[
\Mass(V)\leq \Mass(V_1)+\Mass(V_2)\leq \left(\frac{1}{\lambda} + \gamma_k\right)\Mass(Z)
\]
so we are done.
\end{proof}

\begin{remark}\label{symmcase} Let us now show that if \(M\) is a symmetric space, one can take a suitable geodesic cone as a filling (compare the proof of Theorem 1.7 in \cite{wenger_filling}) Recall that a symmetric Hadamard manifold is the product of a euclidean space and a symmetric space of non-compact type. For simplicity, we ignore the euclidean factor. So let \(M=G/K\) be a symmetric space of non-compact type. Let \(\fg = \fk\oplus \fp\) be the associated orthogonal symmetric Lie algebra, \(\fa\subseteq \fp\) a maximal abelian subspace and \(\Sigma\) the set of roots of \(\fg\) with respect to \(\fa\). Consider the set \(C\coloneqq \bigcup_{k\in K} \Exp(\Ad(k)C_0)\). Here \(\Exp: \fp\rightarrow M\) is the exponential map at \(p \coloneqq eK\) and \(C_0\subseteq \fa\) is a closed cone with apex at the origin, chosen together with a constant \(\lambda > 0\) such that \(|\alpha(H)|\geq \lambda \|H\|\) for every \(H\in \fa\) and every \(\alpha\in \Sigma\). It is clear that \(C\) is conical. Further, it has non-empty interior since the map \(K\times \fa \rightarrow \fp\), \((k,H)\mapsto \Exp(\Ad(k)H)\) has a surjective derivative at \((k,H)\) if \(\alpha(H)\neq 0\) for every \(\alpha \in \Sigma\). It follows that \(C\) contains arbitrarily large balls so we may assume that \(\spt(Z)\subseteq C\). Now define \(\varphi\) by \(\varphi(t,x) \coloneqq \sigma(t,p,x)\) for \(t\in[0,1]\), \(x\in M\). Fix \(x\in C\) and let \(c:[0,1]\rightarrow M\), \(c(t) = \Exp(tH)\), be the geodesic from \(p\) to \(x\). Then for every \(t\in [0,1]\) and \(v\in T_xM\) we have that \(D\varphi(t,x)(0,v) = Y(t)\) where \(Y\) is the unique Jacobi field along \(c\) such that \(Y(0)=0\). Every Jacobi field \(Y\) along \(c\) which satisfies \(Y(0) = 0\) can be written as 
\[
Y(t) = \sum_{i=1}^r b_itE_i(t) + \sum_{i=r+1}^nb_i\frac{1}{\lambda_i}\sinh(\lambda_i t)E_i(t)
\]
where \(E_1,\ldots,E_n\) is an orthonormal frame of parallel vector fields along \(c\) and \(\lambda_i > 0\) is such that \(-\lambda_i^2\) is an eigenvalue of the curvature operator \(R_H: \fp\rightarrow \fp\), \(R_H(X) \coloneqq R(X,H)H = -\ad(H)^2X\) (see Proposition 2.15.2(1) in \cite{eberlein}). By how we constructed \(C\), it furthermore holds that \(\lambda_i \geq \lambda\|H\|\). Now it is easy to prove an analogue of Lemma \ref{voldistlemma} and then it follows as in the proof of Theorem \ref{main_thm} that the cone \(V\coloneqq \varphi_\#([[0,1]]\times Z)\) from \(p\) over \(Z\) satisfies \(\Mass(V)\leq \frac{1}{\lambda}\Mass(Z)\).
\end{remark}

\begin{remark}\label{chains}
To prove Theorem 1 for Lipschitz chains, one should proceed similarly as is done in the proof of Theorem 1 in \cite{leuzinger} but project onto \(M_0\times N\) instead of a horosphere, and then use the volume contraction estimates we used above, instead of the ones from \cite{leuzinger}. To fill the projected cycle, one should use Gromov's filling inequality (2.3 in \cite{gromov_filling}).
\end{remark}

\vspace{12pt}

\noindent 
{\bf Acknowledgements } I want to thank Urs Lang for suggesting that I try to prove the result presented in this paper and for introducing the relevant literature to me. I also thank him for corrections to the manuscript and good suggestions. Finally, I want to express my gratitude for financial support from the Swiss National Science Foundation.

{\small
{\sc Department of Mathematics, ETH Zürich, Rämistrasse 101, 8092 Zürich, Switzerland}\\
\indent
{\it Email address}: hjalti.isleifsson@math.ethz.ch}

\begin{thebibliography}{9}

\bibitem{azencott_wilson_1}
Azencott, R. and Wilson, E.: Homogeneous manifolds of negative curvature. I. Trans. Amer. Math. Soc. {\bf 215}, 323-362 (1976)

\bibitem{azencott_wilson_2}
Azencott, R. and Wilson, E.: Homogeneous manifolds of negative curvature. II. Mem. Am. Math. Soc. {\bf 178} (1976)

\bibitem{eberlein}
Eberlein, P.: Geometry of nonpositively curved manifolds. Chicago Lectures in Math., University of Chicago Press, Chicago, IL (1996)

\bibitem{gromov_asympt}
Gromov, M.: Asymptotic invariants of infinite groups. In: Niblo, A., Roller, M.A. (eds.) Geometric Group Theory. London Mathematical Society Lecture Note Series, pp. 1–295. Cambridge University Press, Cambridge (1993)

\bibitem{gromov_filling}
Gromov, M.: Filling Riemannian manifolds. J. Differ. Geom. {\bf 18}, 1-147 (1983)

\bibitem{heber}
Heber, J.: On the geometric rank of homogeneous spaces of nonpositive curvature. Invent. Math. {\bf 112}, 151-170 (1993)

\bibitem{kl_hrh}
Kleiner, B., Lang U.: Higher rank hyperbolicity. Invent. Math. {\bf 221}, 597-664 (2020)

\bibitem{leuzinger}
Leuzinger, E.: Optimal higher-dimensional Dehn functions for some CAT(0) lattices. Groups Geom. Dyn. {\bf 8}, 441–466 (2014)

\bibitem{wenger_euclidean}
Wenger, S.: Isoperimetric inequalities of Euclidean type in metric spaces. Geom. Funct. Anal. {\bf 15}, 534–554 (2005)

\bibitem{wenger_filling} 
Wenger, S.: Filling invariants at infinity and the Euclidean rank of Hadamard spaces. Int. Math. Res. Not. {\bf 16}, 1-33 (2006)

\bibitem{wolter}
Wolter, T.: Geometry of homogeneous Hadamard manifolds. Int. J. Math. {\bf 2}, 223-234 (1991)

\end{thebibliography}
\end{document}